\definecolor{webred}{rgb}{0.75,0,0}
\definecolor{webgreen}{rgb}{0,0.75,0}
\definecolor{refkey}{gray}{0.75}
\numberwithin{equation}{section}
\newtheorem{theo}{Theorem}[section]
\newtheorem{lem}{Lemma}[section]
\newtheorem{Def}[theo]{Definition}
\theoremstyle{remark}
\newtheorem{rem}{Remark}[section]
\newcommand{\ep}{\varepsilon}
\def\R{{\mathbb{R}}}
\def\d{\displaystyle}
\def\e{{\varepsilon}}
\def\p{\partial}
\date{}
\subjclass[2010]{35L15, 35L71,  35B44}
\keywords{ Blow-up, Generalized Tricomi equation, Exterior domain, Lifespan, Nonlinear wave equations, Scale-invariant damping, Time-derivative nonlinearity, Test function technique.}
\begin{document}

\title[Nonexistence result for the generalized Tricomi  equation]{blowup and lifespan estimate for the generalized
tricomi equation with the  scale-invariant damping and time derivative nonlinearity on exterior domain}
\author[M. Hamouda, M. A. Hamza  and B. Yousfi]{Makram Hamouda$^{1}$, Mohamed Ali Hamza$^{1}$ and Bouthaina Yousfi$^{2}$}
\address{$^{1}$  Department of Basic Sciences, Deanship of Preparatory Year and Supporting Studies, Imam Abdulrahman Bin Faisal University, P. O. Box 1982, Dammam, Saudi Arabia.}
\address{$^{2}$ Department of Mathemtics, Faclty of Sciences of Tunis, University Tunis El Manar, Tunisia.}

\medskip

\email{mmhamouda@iau.edu.sa (M. Hamouda)} 
\email{mahamza@iau.edu.sa (M.A. Hamza)}
\email{bouthaina.yousfi@fst.utm.tn (B. Yousfi)}

\pagestyle{plain}


\maketitle

\begin{abstract}

The article is devoted to investigating the initial boundary value problem for the damped wave equation in the \textit{scale-invariant} case with time-dependent speed of propagation on the exterior domain of a ball. By presenting suitable multipliers and applying the test-function technique, we study the blowup and the lifespan of the solutions to the problem with a derivative-type nonlinearity, namely
\begin{displaymath}
\d  u_{tt}-t^{2m}\Delta u+\frac{\mu}{t}u_t=|u_t|^p,
\quad \mbox{in}\ \Omega^{c}\times[1,\infty),
\end{displaymath}
that we associate with appropriate small initial data.
\end{abstract}


\section{Introduction}
\par\quad

We primarily consider the following initial boundary value problem for the damped wave equation on exterior domain, which is characterized by  a scale-invariant damping term and a time-dependent speed of propagation:
\begin{equation}
\label{G-sys}
\left\{
\begin{array}{l}
\d u_{tt}-t^{2m}\Delta u+\frac{\mu}{t}u_t=|u_t|^p,
\quad \mbox{in}\ \Omega^{c}\times[1,\infty),\\
u(x,1)=\e u_0(x),\ u_t(x,1)=\e u_1(x), \quad  x\in\Omega^{c},\\
u(x,t)\vert_{\p \Omega}=0, \quad t\geq 1,
\end{array}
\right.
\end{equation}
where $m$ is a nonnegative constant, $p>1$, $\mu \ge 0$ , $\ N \ge 1$ denotes the space dimension and the domain $\Omega^{c}=\R^{N} \backslash B_{1}(0)$ is the exterior of the unit ball. 

We set $B_{R}(0)=\left\{x \in \R^{N} : |x|<R \right\}$ with  $R>0$. The initial values $(u_{0}(x),u_{1}(x))$ belong to the sapce  $H^{1}(\Omega^{c})\times L^{2}(\Omega^{c})$.
 Moreover, the functions $u_{0}$ and $u_{1}$ are supposed to be positive and compactly supported on  $B_{R}(0) \backslash B_{1}(0)$ with  $R>1$. The constant 
 $\e>0$ is a sufficiently  small parameter describing the smallness of the initial data.\\

Let $\mu=m=0$ in \eqref{G-sys}. This   leads to  the semilinear wave equation with time-derivative nonlinearity on an exterior domain, which is  known to be characterized by a critical value of the exponent of the nonlinear term. The problem with derivative-type nonlinearity $|u_{t}|^{p}$, as in \eqref{G-sys} with $\mu=m=0$, we have the Glassey critical exponent $p_{G}$, which is given by
\begin{equation}\label{pg}
p_G=p_G(N):=1+\frac{2}{N-1},
\end{equation}
for which the blowup results are established for
 $p \leq p_{G}$; see  \cite{WH, Zhou2}. 

On the other hand and although it is not in the scope of the present work, we would like to mention the  case of power nonlinearity $|u|^{q}$ which is more or less well-understood. This case is marked by 
the Strauss critical exponent, denoted by $q_{S}$, which is the solution of the following quadratic equation:
\begin{equation}\label{qs}
    (N-1)q^{2}-(N+1)q-2=0.
\end{equation}
It is worth to mention that for $q \leq q_S$, there is no global solution and the lifespan estimates  have been  extensively studied. For more details, we refer the reader to the references \cite{WH, WH1, LLWW, LZ, LZ2, SW, Zhou2}. For $q> q_{S}$, the global existence is investigated in \cite{DMSZ, Hidano0, SSW, Wang}.

Now, we briefly review several previous blowup results about the Cauchy problem of the semilinear {\it damped} wave equation on exterior domains  with constant speed of propagation, namely $m = 0$. Y. Su et al. \cite{Su} acquire the upper bound lifespan estimates of solutions to the semilinear wave equation with damping term, and Neumann boundary conditions on an exterior domain in  dimension three. They show the blowup phenomenon by making use of the radial symmetry test function $\psi(r,t)=e^{-t}\frac{1}{r}e^{r}$, with $r=\sqrt{x_1^{2}+x_{2}^{2}+x_3^{2}}$. The aforementioned test function with two other radial functions, namely $\psi_{1}(r,t)=\rho_1(t)\frac{1}{r}e^{r}$ and $\psi_{2}(r,t)=\rho_{2}(t)\frac{1}{r}e^{r})$,  were also used in \cite{Fan} (with $\rho_i(t), i=1,2$ are functions to be determined). More precisely, X. Fan et al. \cite{Fan} apply the test function technique to study the same Cauchy problem but with a mass term. Employing the cut-off test function technique $(\psi(x,t)=\eta^{2p'}_{T}(t)\phi_{0}(x),\eta^{2p'}_{T}(t) \Phi(x,t) )$, C. Ren et al. \cite{Ren} investigate the blowup of solutions to the semilinear wave equation with a scale-invariant damping supplemented with a time-derivative nonlinearity  $|u_{t}|^{p}$  and combined nonlinearities  $|u_{t}|^{p} + |u|^{q}$, respectively.

Recently, several works have been devoted to the study of the Cauchy problem for semilinear damped wave equation with a time-dependent speed of propagation in the whole space, namely
\begin{equation}
\label{Sys1}
\left\{
\begin{array}{l}
\d u_{tt}-t^{2m}\Delta u+\frac{\mu}{t}u_t=|u_t|^p,
\quad \mbox{in}\ \R^{N}\times[1,\infty),\\
u(x,1)=\e u_0(x),\ u_t(x,1)=\e u_1(x), \quad  x\in\R^{N}.
\end{array}
\right.
\end{equation}
Concerning the blowup results and lifespan estimate of the solution to \eqref{Sys1} with $\mu=0$ and $m\geq 0$, it was
proven, in \cite{Our5}, that  the blowup holds for $p\leq p_{T}(N,m)$, where $p_{T}(N,m)$ is given by
\begin{equation}\label{pT}
    p_{T}(N,m):=1+\frac{2}{(m+1)(N-1)-m}, \quad \mbox{for}~ N \geq 2.
\end{equation}
Other blowup results for the combined nonlinearities are
obtained in \cite{CLP, HHP2}.
\par Motivated by the previous works in \cite{HHH, HHP1,HHP2}, our main objective is to study the lifespan estimates of solutions to \eqref{G-sys} which extends the domain of the problem \eqref{Sys1} to  exterior domains. In the same direction, we will investigate  in a subsquent work  \cite{HHY} the blowup results of solutions to problem \eqref{G-sys} with combined nonlinearities  $|u_{t}|^{p} + |u|^{q}$.

The main idea  in the present paper is the employment of adequate test functions and the construction of specific functionals which allow us to illustrate the blowup phenomenon and the upper bound lifespan estimate of the solution to the problem \eqref{G-sys}.

The remainder of this paper is organized as follows. In Section \ref{sec-main}, we introduce the definition of the weak solution of (\ref{G-sys}). Then, we state the main
theorem (Theorem \ref{blowup}) of our work. In Section \ref{aux}, we prove some technical lemmas to develop the proofs
of our results. Finally, Section \ref{sec-ut} is concerned with the proof of Theorem \ref{blowup}.

\section{Main Result}\label{sec-main}
\par

In this section, we will state our main result. But, before that we start by giving the definition of solution of (\ref{G-sys}) in the corresponding energy space.
\begin{Def}\label{def1}
Let $u_{0} \in H^{1}(\Omega^{c})$ and $u_{1} \in L^{2}(\Omega^{c})$. We say that $u$ is a weak solution of 
 (\ref{G-sys}) on $[1,T)$ if 
\begin{equation}\label{u-hyp}
 \left\{
\begin{array}{ll}
\d u\in \mathcal{C}([1,T),H^1(\Omega^{c}))\cap \mathcal{C}^1([1,T),L^2(\Omega^{c})) \ 
\\ \ u_t \in L^p_{loc}((1,T)\times \Omega^{c}),
\end{array}
\right.
\end{equation}
verifies, for all $\Phi\in \mathcal{C}_0^{\infty}(\Omega^{c}\times[1,T))$ and all $t\in[1,T)$, the following equality:
\begin{equation}
\label{energysol2}
\begin{array}{l}
\d\int_{\Omega^{c}}u_t(x,t)\Phi(x,t)dx-\int_{\Omega^{c}}u_t(x,1)\Phi(x,1)dx  -\int_1^t  \int_{\Omega^{c}}u_t(x,s)\Phi_t(x,s)dx \,ds\vspace{.2cm}\\
\d-\int_1^t  \int_{\Omega^{c}}s^{2m} \nabla u(x,s).\nabla \Phi(x,s) dx \,ds+\int_1^t  \int_{\Omega^{c}}\frac{\mu}{s}u_t(x,s) \Phi(x,s)dx \,ds\vspace{.2cm}\\
\d =\int_1^t \int_{\Omega^{c}}|u_t(x,s)|^p\Phi(x,s)dx \,ds,
\end{array}
\end{equation}
and the condition $u(x,1)=\varepsilon u_0(x)$ is fulfilled in $H^1(\Omega^{c})$. 
\end{Def}

The objective of our main result consists in deriving  the blowup region and the lifespan estimate of the solutions of \eqref{G-sys}. This is presented in the following
theorem.

\begin{theo}
\label{blowup}
Let $\mu \ge 0$, $m\ge 0$. Assume that  $(u_0,u_1) \in H^1(\Omega^{c}) \times L^2(\Omega^{c})$
and satisfy the following
conditions 
\begin{equation}\label{u0u1}
\left\{
\begin{aligned}
&u_{0}(x) \geq 0,~ u_{1}(x) \geq 0,~ a.e., \\
&u_0(x)=u_1(x)=0, ~for ~|x|>R, ~where ~R>1,
\\&u_0(x),~u_1(x) \not\equiv 0.
\end{aligned}
\right.
\end{equation}
Then,  there exists $\e_0=\e_0(u_0,u_1,N,R,p,m,\mu)>0$ such that for any $0<\e\le\e_0$ the solution $u$  to \eqref{G-sys} which satisfies 
$$\mbox{\rm supp}(u)\ \subset\{(x,t)\in\Omega^{c}\times[1,\infty): |x|\le R+\phi_m(t)\},$$
blows up in finite time $T_\e$, where
\begin{equation}\label{xi}
\phi_m(t):=\frac{t^{1+m}}{1+m}.
\end{equation} 
Furthermore, the upper bound of $T_\e$ is given,  for $N \ge 2$, by
\begin{displaymath}
T_\e \leq
\d \left\{
\begin{array}{ll}
 C \, \e^{-\frac{2(p-1)}{2-((1+m)(N-1)-m+\mu)(p-1)}}
 &
 \ \text{for} \
 1<p<p_{T}(N+\frac{\mu}{m+1},m), \vspace{.1cm}
 \\
 \exp\left(C\e^{-(p-1)}\right)
&
 \ \text{for} \ p=p_{T}(N+\frac{\mu}{m+1},m),
\end{array}
\right.
\end{displaymath}
 where $p_{T}(N,m)$ is defined by \eqref{pT}, and  for $N=1$, we have
\begin{displaymath}
T_\e \leq
\d \left\{
\begin{array}{ll}
 C \, \e^{-\frac{2(p-1)}{2-(\mu-m)(p-1)}}
 & \text{for} \
1< p < p_{\mu,m}:= \left\{
\begin{array}{ll}
1+\frac{2}{\mu-m} &\text{if} \ m<\mu \le m+\frac{2}{p-1}, \\
\infty  &\text{if} \ \mu \le m,
\end{array}
\right.
 \\
 \exp\left(C\e^{-(p-1)}\right)
&
 \ \text{for} \ 1<p=1+\frac{2}{\mu-m}.
\end{array}
\right.
\end{displaymath}
In the above lifespan estimates, the constant $C$ is positive and independent of $\e$.
\end{theo}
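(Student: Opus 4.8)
The plan is to establish the blow-up by the test-function technique, working from the weak formulation \eqref{energysol2}, and to read off the lifespan from the resulting integral inequality. The first step is to construct multipliers adapted to the operator $\p_t^2-t^{2m}\Delta+\frac{\mu}{t}\p_t$ on $\Omega^{c}$. For the spatial part I would take a positive function $\varphi\in \mathcal{C}^2(\Omega^{c})$ solving $\Delta\varphi=\varphi$ in $\Omega^{c}$ with $\varphi|_{\p\Omega}=0$; on the exterior of the unit ball this is an explicit modified-Bessel-type solution whose radial profile grows like $|x|^{-(N-1)/2}e^{|x|}$ at infinity (the analogue of the $r^{-1}e^{r}$ function quoted in the introduction, adjusted to vanish on $\p\Omega$). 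The vanishing of $\varphi$ on $\p\Omega$, together with the Dirichlet condition $u|_{\p\Omega}=0$, is exactly what kills the boundary contributions from $\int_{\Omega^{c}}(\Delta u\,\varphi-u\,\Delta\varphi)\,dx$. For the temporal part I would choose $\lambda(t)$ solving the reduced adjoint ODE $\lambda''-\frac{\mu}{t}\lambda'+\frac{\mu}{t^2}\lambda=t^{2m}\lambda$, whose decaying branch behaves, by a WKB analysis, like a power of $t$ times $e^{-\phi_m(t)}$ with $\phi_m$ as in \eqref{xi}; then $\Psi(x,t):=\lambda(t)\varphi(x)$ solves the homogeneous adjoint equation, so that all linear $u$-terms telescope.

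The second step is to introduce the functionals
\[
F_0(t):=\int_{\Omega^{c}} u(x,t)\,\Psi(x,t)\,dx,\qquad F_1(t):=\int_{\Omega^{c}} u_t(x,t)\,\Psi(x,t)\,dx,
\]
and to record, taking $\Phi=\Psi$ in \eqref{energysol2} and integrating by parts twice in time and once in space, the identity in which every linear term reorganizes (by the adjoint choice of $\Psi$) into boundary-in-time and data contributions, leaving precisely $\int_1^t\int_{\Omega^{c}}|u_t|^p\,\Psi\,dx\,ds$ on the right. The positivity hypotheses \eqref{u0u1} on $(u_0,u_1)$, together with $\varphi>0$ and $\lambda'<0$, force $F_0,F_1$ to start strictly positive of size $\asymp\e$, and a standard continuation argument keeps them positive on the maximal existence interval; differentiating the identity then yields a closed system coupling $F_0$ and $F_1$.

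The third step, which is the core of the argument, is to feed the nonlinearity back through Hölder's inequality on the support slice $\{|x|\le R+\phi_m(t)\}$,
\[
\int_{\Omega^{c}}|u_t|^p\,\Psi\,dx\ \ge\ \Big(\int_{\Omega^{c}} u_t\,\Psi\,dx\Big)^{p}\Big(\int_{|x|\le R+\phi_m(t)} \Psi\,dx\Big)^{-(p-1)},
\]
where the weight integral is estimated from the profiles of $\varphi$ and $\lambda$ over a ball of radius $\asymp\phi_m(t)$. This produces a power of $\phi_m(t)$, and since $\phi_m(t)\asymp t^{m+1}$ the damping factor $t^{-\mu}$ carried by $\lambda$ converts into $\phi_m(t)^{-\mu/(m+1)}$; this is the mechanism that shifts the effective dimension from $N$ to $N+\frac{\mu}{m+1}$ and explains the appearance of $p_{T}(N+\frac{\mu}{m+1},m)$ in the statement. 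Reducing the coupled system to a single Riccati-type inequality, schematically $F_1'(t)\gtrsim \big(R+\phi_m(t)\big)^{-((1+m)(N-1)-m+\mu)(p-1)}\,F_1(t)^p$ with $F_1\gtrsim\e$, and integrating by ODE comparison, shows $F_1$ cannot stay finite past some $T_\e$; balancing the accumulated time-integral against $\e$ gives the polynomial lifespan $T_\e\le C\e^{-2(p-1)/[2-((1+m)(N-1)-m+\mu)(p-1)]}$ when $2-((1+m)(N-1)-m+\mu)(p-1)>0$, and the borderline logarithmic divergence at $p=p_{T}(N+\frac{\mu}{m+1},m)$ produces $T_\e\le\exp(C\e^{-(p-1)})$. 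The case $N=1$ is handled identically but with the one-dimensional profile of $\varphi$ (no $|x|^{-(N-1)/2}$ factor), which is why the threshold degenerates to $p_{\mu,m}$ and the exponent reduces to $\mu-m$.

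I expect the main obstacle to be twofold. First, the rigorous construction and sharp asymptotics of $\lambda(t)$ for the coupled Tricomi-plus-scale-invariant-damping ODE, including a quantitative control of the decaying branch and of any error by which $\Psi$ fails to solve the adjoint equation exactly. Second, and more delicate for the lifespan, securing the \emph{exact} power of $\phi_m(t)$ in the weight estimate of Step three, since the precise exponent — and in particular the dimensional shift $\mu/(m+1)$ — hinges on matching the exponential growth of $\varphi$ against the radius $\phi_m(t)$ of the support cone and on the exact decay rate extracted from $\lambda$. Ensuring the boundary term on $\p\Omega$ either vanishes or carries the harmless sign is a further technical point specific to the exterior-domain setting.
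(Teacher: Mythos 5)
Your proposal follows essentially the same route as the paper: the adjoint test function $\Psi=\lambda(t)\varphi(x)$ with $\Delta\varphi=\varphi$, $\varphi|_{\partial\Omega}=0$ and $\lambda$ the decaying modified-Bessel solution of exactly the ODE \eqref{lambda}; your functionals $F_0,F_1$ are the paper's $\mathcal{U}_1,\mathcal{U}_2$; and the H\"older estimate over the slice $\{|x|\le R+\phi_m(t)\}$ (the paper's Lemma \ref{lemmpsi1}) feeds the same Riccati-type inequality \eqref{inequalityfornonlinearin}, integrated with the identical subcritical/critical and $N=1$ versus $N\ge 2$ case split, including the correct effective dimensional shift $N\mapsto N+\frac{\mu}{m+1}$. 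The only substantive deviation is that you secure positivity of $F_1$ by a continuation argument (which is workable here, since positivity of $F_0$ follows directly from the first-order identity \eqref{eq6}), whereas the paper obtains both positivity and the quantitative coercive bound $\mathcal{U}_2\ge C\e$ --- which is what actually initializes the Riccati integration --- via the auxiliary functionals $\mathcal{F}_1,\mathcal{F}_2$ with weight $\psi_0=e^{-(\phi_m(t)-\phi_m(1))}\phi$ and multiplier $\mathcal{M}(t)=t^{\mu}$ (Lemmas \ref{F2+} and \ref{F11}); note also that your ``schematic'' weight $(R+\phi_m(t))^{-((1+m)(N-1)-m+\mu)(p-1)}$ should read $t^{-[(1+m)(N-1)-m+\mu](p-1)/2}$, as your stated final lifespan exponents already presuppose.
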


\begin{rem}
\textnormal{In view of the upper bound of the lifespan obtained in Theorem \ref{blowup}  for $N=1$, it is worth mentioning the competition between the damping and the tricomi terms. In fact, in the absence of  damping, the blowup of the solutions to \eqref{G-sys} in  dimension $N=1$ occurs for all $p>1$.}
\end{rem}

\begin{rem}
\textnormal{Note that the results in Theorem \ref{blowup} remain unchanged when adding a mass term in the Cauchy problem \eqref{G-sys}, this will be rigorously proven in the forthcoming work \cite{HHY}. Furthermore, replacing the damping term $\frac{\mu}{1+t} u_t$ in  \eqref{G-sys} by $b(t)u_t$, with $[b(t)-\mu (1+t)^{-1}]$ belonging to $L^1(0,\infty)$, leads to the same results as in Theorem \ref{blowup}. The proof is as well the same with the necessary adaptations.}
\end{rem}

\begin{rem}
\textnormal{The extension of the results in Theorem \ref{blowup} to a tricomi term with negative power, namely $-1<m<0$, can be done  by following the proofs in \cite{HHP1}, and this will be one of the subjects in the forthcoming work \cite{HHY}.}
\end{rem}

\section{Auxiliary results}\label{aux}
\par
In the present study, we will use an adequate test function as a principal tool. Thus, motivated by some previous works \cite{Fan,Ren,Zhou2}, we define the following positive test function 
\begin{equation}
\label{test11}
\psi(x,t):=\rho(t)\phi(x),
\end{equation}
where $\phi(x)$ will be introduced in Lemma \ref{lemphi1} below and $\rho(t)$ is a solution of
\begin{equation}\label{lambda}
\frac{d^2 \rho(t)}{dt^2}-t^{2m}\rho(t)-\frac{d}{dt}\left(\frac{\mu}{t}\rho(t)\right)=0, \quad t \ge 1.
\end{equation}
In fact, the expression of $\rho(t)$ is given by
\begin{equation}\label{rho}
\rho(t)=t^{\frac{\mu+1}{2}}K_{\frac{\mu-1}{2(1+m)}}(\phi_m(t)), \quad \forall \ t \ge 1,
\end{equation}
where $K_{\eta}$ is the modified Bessel function of second kind defined as
\begin{equation}\label{bessel}
K_{\eta}(t)=\int_0^\infty\exp(-t\cosh \zeta)\cosh(\eta \zeta)d\zeta,\ \eta\in \mathbb{R},
\end{equation}
and $\phi_m(t)$ is defined in \eqref{xi}; see Appendix A in \cite{HHH} for more details about the obtaining of the solution in  \eqref{rho}. Furthermore, we enumerate some useful properties of the function $\rho(t)$, in Appendix \ref{appendix1} below, that will be used in the proof of our main result.
\\Hence, we can easily see that the function $\psi(x,t)$ verifies the conjugate equation corresponding to the linear problem, namely we have 
\begin{equation}\label{lambda-eq}
\partial^2_t \psi(x, t)-t^{2m}\Delta \psi(x, t) -\frac{\partial}{\partial t}\left(\frac{\mu}{t}\psi(x, t)\right)=0.
\end{equation}
\color{black}

Throughout this work, we use a generic parameter $C$ to denote a positive constant that might be dependent on ($p,m,\mu,N,u_0,u_1,\ep_0$) but independent of $\ep$ and whose value might be different  from a line to another. However, when necessary, we will explicitly mention the expression of $C$ in terms of the aforementioned parameters.\\

In order to prove the principal result, we first present several related lemmas.

\begin{lem}[\cite{Zhou2}]
\label{lemphi1} There exists a function $\phi(x) \in C^{2}(\Omega^{c})$ satisfying
the following boundary value problem,
\begin{equation}\label{boundarypb1}
 \left\{
\begin{array}{ll}
\d \Delta\phi(x)=\phi(x),\quad \mbox{in}\ \Omega^{c} \subset \R^N,~ \mbox{for}\ N\ge1,\vspace{.2cm}\\
 \phi(x) \big{\vert}_{\p \Omega^{c}}=0,\\
\d  \phi(x) \to \int_{S^{N-1}}e^{x\cdot\omega}d\omega, \quad \text{as} ~|x| \to \infty.
\end{array}
\right.
\end{equation}
\end{lem}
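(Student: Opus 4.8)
The plan is to construct $\phi$ explicitly by superposing the standard entire solution of the modified Helmholtz equation with a radially decaying corrector that enforces the homogeneous boundary condition. First I would set $\phi_0(x):=\int_{S^{N-1}}e^{x\cdot\omega}\,d\omega$ and check that it already solves the governing equation on all of $\R^{N}$: differentiating under the integral sign gives $\Delta_x e^{x\cdot\omega}=|\omega|^2 e^{x\cdot\omega}=e^{x\cdot\omega}$ because $|\omega|=1$, hence $\Delta\phi_0=\phi_0$. Moreover $\phi_0$ is smooth, strictly positive, and radially symmetric (the spherical measure is rotation invariant), so $\phi_0(x)=\Lambda(|x|)$ for some positive profile $\Lambda$; this is precisely the prescribed behavior at infinity.

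Second, I would produce a second, exponentially decaying radial solution. For radial profiles $g(r)$, $r=|x|$, the equation becomes $g''+\tfrac{N-1}{r}g'=g$, whose two independent solutions are the modified-Bessel combinations $r^{-(N-2)/2}I_{(N-2)/2}(r)$ and $\Psi(r):=r^{-(N-2)/2}K_{(N-2)/2}(r)$; the latter is positive for $r>0$ and decays like $r^{-(N-1)/2}e^{-r}$ at infinity (equivalently $\Psi(|x|)$ is a constant multiple of the Yukawa/Bessel potential, the fundamental solution of $-\Delta+1$). I would then define
\[
\phi(x):=\phi_0(x)-\frac{\Lambda(1)}{\Psi(1)}\,\Psi(|x|).
\]
By linearity $\phi$ solves $\Delta\phi=\phi$ on $\Omega^{c}$; it vanishes on $\partial\Omega^{c}=\{|x|=1\}$ by the choice of the coefficient; and since $\Psi(|x|)\to0$ as $|x|\to\infty$ we get $\phi(x)-\phi_0(x)\to0$, i.e. $\phi(x)\to\int_{S^{N-1}}e^{x\cdot\omega}\,d\omega$. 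Regularity is immediate, as both $\phi_0$ and $\Psi(|\cdot|)$ are $C^{\infty}$ on $\Omega^{c}$, so $\phi\in C^{2}(\Omega^{c})$.

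Finally, to certify that $\phi$ is an admissible (positive) test function I would invoke the minimum principle for $\Delta\phi=\phi$: a solution cannot attain a negative interior minimum, since at such a point $x_0$ one would have $\Delta\phi(x_0)\ge0$ yet $\phi(x_0)<0$. Applying this on the annuli $\{1\le|x|\le\rho\}$, where the boundary data are $0$ on the inner sphere and close to $\Lambda(\rho)>0$ on the outer sphere, and letting $\rho\to\infty$, yields $\phi\ge0$ throughout $\Omega^{c}$. The only genuinely technical input is the positivity and exponential decay of the corrector $\Psi$, which I expect to be the main (though classical) obstacle; it is settled at once by identifying $\Psi(|x|)$ with the fundamental solution of $-\Delta+1$. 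For $N\ge 2$ no further work is needed, while for $N=1$ the same scheme applies verbatim with $\Psi(r)=e^{-r}$ and $\phi_0(x)=2\cosh x$ on each component of $\Omega^{c}$.
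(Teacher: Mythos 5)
Your construction is correct, and it is worth noting that it is \emph{more} than the paper itself provides: the paper offers no proof of this lemma, importing it wholesale from Zhou--Han \cite{Zhou2}, where the analogous test function is built in a more general exterior-domain setting (with variable coefficients and a non-spherical obstacle), so that no closed-form corrector is available and the existence must be obtained by indirect elliptic arguments (exterior Dirichlet problem for $\Delta-1$, maximum principle, limiting procedure over truncated domains). Your route exploits the specific geometry $\Omega^{c}=\R^{N}\setminus B_1(0)$ of the present paper: since $\phi_0(x)=\int_{S^{N-1}}e^{x\cdot\omega}d\omega$ is radial, its trace on $\{|x|=1\}$ is a single constant $\Lambda(1)$, so the one decaying radial solution $\Psi(r)=r^{-(N-2)/2}K_{(N-2)/2}(r)$ suffices to enforce the Dirichlet condition, and the equation, the boundary condition, the convergence at infinity and the $C^{2}$ (indeed $C^{\infty}$) regularity are all read off the explicit formula $\phi=\phi_0-\frac{\Lambda(1)}{\Psi(1)}\Psi(|\cdot|)$. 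The individual steps check out: differentiation under the integral gives $\Delta\phi_0=\phi_0$; the pair $r^{-\nu}I_{\nu}(r)$, $r^{-\nu}K_{\nu}(r)$ with $\nu=(N-2)/2$ does span the radial solutions of $g''+\frac{N-1}{r}g'=g$; positivity and the decay $K_{\nu}(r)\sim\sqrt{\pi/(2r)}\,e^{-r}$ are exactly what is recorded in \eqref{Kmu}; and your minimum-principle argument on the annuli $\{1\le |x|\le\rho\}$ (no negative interior minimum is possible since $\Delta\phi(x_0)=\phi(x_0)$) legitimately yields $\phi\ge 0$, which is the lower half of \eqref{phi-equivalente}. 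In fact your explicit formula gives the upper half of \eqref{phi-equivalente} for free, since the corrector is nonnegative and hence $\phi\le\phi_0\le C(1+|x|)^{-\frac{N-1}{2}}e^{|x|}$ by the standard asymptotics of $\phi_0$ (equivalently of $I_{(N-2)/2}$). Your $N=1$ case is also right: on $[1,\infty)$ one gets $\phi(x)=2\cosh x-2e\cosh(1)e^{-x}=2e\sinh(x-1)\ge 0$. In short: what you buy is a short, explicit, self-contained proof tailored to the exterior of the ball; what the cited approach buys is generality to arbitrary smooth obstacles, which the present paper does not actually need.
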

Moreover, there exists a positive constant $C$ such that,
\begin{equation}\label{phi-equivalente}
    0\leq \phi(x) \leq C(1+|x|)^{-\frac{N-1}{2}} e^{|x|}, \quad \forall~ x \in \Omega^{c}, \quad \forall~ N \ge 1.
\end{equation}
\par
In the following lemma, we set a classical estimate result for the function $\psi(x,t)$ identified in \eqref{test11}.
\begin{lem}\label{lemmpsi1}
Let $R > 1$. Assume that $\phi$ satisfies the conditions in Lemma \ref{lemphi1} and $\psi(x,t)$ is as in \eqref{test11}. Then, for all $t \geq 1$, it holds that
\begin{equation}\label{psi1}
 \int_{\Omega^{c}\cap \{|x|\leq \phi_{m}(t)+R\}}\psi(x,t)dx \leq C\rho(t)e^{\phi_{m}(t)}(R+\phi_{m}(t))^{\frac{N-1}{2}},
\end{equation}
where $C=C(m,N,R)>0$.
\end{lem}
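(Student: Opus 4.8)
The plan is to exploit the product structure $\psi(x,t)=\rho(t)\phi(x)$ from \eqref{test11}. Since $\rho(t)$ carries no spatial dependence, I would factor it out of the integral immediately, so that the assertion reduces to the purely spatial estimate
\[
\int_{\Omega^{c}\cap\{|x|\le \phi_m(t)+R\}}\phi(x)\,dx \;\le\; C\,e^{\phi_m(t)}\,(R+\phi_m(t))^{\frac{N-1}{2}} .
\]
The first substitution is the pointwise bound \eqref{phi-equivalente}, namely $0\le \phi(x)\le C(1+|x|)^{-\frac{N-1}{2}}e^{|x|}$, which turns the task into controlling $\displaystyle\int_{\{1\le|x|\le \phi_m(t)+R\}}(1+|x|)^{-\frac{N-1}{2}}e^{|x|}\,dx$.

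Next I would pass to polar coordinates. Writing $r=|x|$ and integrating the angular variable over $S^{N-1}$ produces a constant $|S^{N-1}|$ depending only on $N$, leaving the radial integral $\int_{1}^{A}(1+r)^{-\frac{N-1}{2}}e^{r}r^{N-1}\,dr$ with $A:=\phi_m(t)+R$. Because $N\ge 1$, the exponent $\tfrac{N-1}{2}$ is nonnegative, so the elementary inequality $1+r\ge r$ yields $(1+r)^{-\frac{N-1}{2}}\le r^{-\frac{N-1}{2}}$, and hence $r^{N-1}(1+r)^{-\frac{N-1}{2}}\le r^{\frac{N-1}{2}}$. This collapses the radial integral into $\int_{1}^{A}r^{\frac{N-1}{2}}e^{r}\,dr$.

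The crucial — and essentially the only — point is to show that this integral is dominated by its outer-endpoint value, i.e. $\int_{1}^{A}r^{\frac{N-1}{2}}e^{r}\,dr\le C\,A^{\frac{N-1}{2}}e^{A}$. Since $\tfrac{N-1}{2}\ge 0$ one has $r^{\frac{N-1}{2}}\le A^{\frac{N-1}{2}}$ on all of $[1,A]$, so the power factors out and the remaining $\int_{1}^{A}e^{r}\,dr=e^{A}-e\le e^{A}$ closes the estimate. Substituting $A=\phi_m(t)+R$ and writing $e^{A}=e^{R}e^{\phi_m(t)}$, with the fixed factor $e^{R}$ absorbed into the constant, recovers exactly the claimed bound once the factor $\rho(t)$ is reinstated; every constant produced depends only on $N$ and $R$ and is therefore admissible as $C=C(m,N,R)$. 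I do not expect a genuine obstacle: the argument is a routine combination of the growth bound for $\phi$ with the observation that an exponentially weighted radial integral concentrates at the largest radius, so the only care needed is to keep the endpoint estimate and the polar-coordinate reduction consistent with the sign of $\tfrac{N-1}{2}$ in the degenerate case $N=1$, where that exponent vanishes and the bound is immediate.
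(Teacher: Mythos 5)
Your proposal is correct and follows essentially the same route as the paper: factor out $\rho(t)$, apply the pointwise bound \eqref{phi-equivalente}, pass to polar coordinates, and show the exponentially weighted radial integral is dominated by its outer endpoint, absorbing the fixed factor $e^{R}$ into the constant. The only (immaterial) difference is the last step, where the paper estimates $\int_{0}^{A} r^{\frac{N-1}{2}}e^{r-\phi_m(t)}\,dr$ by an integration by parts, while you use the even more elementary bound $r^{\frac{N-1}{2}}\le A^{\frac{N-1}{2}}$ followed by $\int_{1}^{A}e^{r}\,dr\le e^{A}$, which yields the same conclusion.
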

\begin{proof}
Let $x \in \Omega^{c}$. Using  (\ref{phi-equivalente}), we infer that
\begin{equation}\label{3.8'}
\begin{aligned}
\int_{\Omega^{c}\cap \{|x|\leq \phi_{m}(t)+R\}}\psi(x,t)dx &  = \int_{\Omega^{c}\cap \{|x|\leq \phi_{m}(t)+R\}}\rho(t)\phi(x)dx 
\\& \leq C\rho(t) \int_{ \{|x|\leq \phi_{m}(t)+R\}} |x|^{-\frac{(N-1)}{2}}e^{|x|}dx
\\&\leq C\rho(t) \int_{S^{N-1}} dw \int_{0}^{\phi_{m}(t)+R} r^{-\frac{N-1}{2}} e^{r} r^{N-1}dr
\\&\leq C\rho(t) e^{\phi_{m}(t)} \int_{0}^{\phi_{m}(t)+R} r^{\frac{N-1}{2}} e^{r-\phi_{m}(t)}dr.
\end{aligned}
\end{equation}
By performing an integration by parts for the last integral in the RHS of \eqref{3.8'}, it holds that
$$\begin{aligned}
\int_{\Omega^{c}\cap \{|x|\leq \phi_{m}(t)+R\}}\psi(x,t)dx &\leq C\rho(t) e^{\phi_{m}(t)}(\phi_{m}(t)+R)^{\frac{N-1}{2}}.
\end{aligned}$$
This proves Lemma \ref{lemmpsi1}.
\end{proof}
Recall that $\psi(x,t)$  is compactly supported and its expression is given by \eqref{test11}, and using  Lemma \ref{lemphi1}, we have 
\begin{equation}\label{psi_u}
\begin{aligned}
\int_{\Omega^{c}} \psi(x,s) \Delta u(x,s) dx &= \int_{\partial \Omega^{c}} \psi(x,s) [\nabla u(x,s) \cdot n] dS-\int_{\Omega^{c}} \nabla\psi(x,s)\cdot\nabla u(x,s) dx
\\&=-\int_{\partial \Omega^{c}} u(x,s)[\nabla\psi(x,s)\cdot n] dS+ \int_{\Omega^{c}} u(x,s) \Delta \psi(x,s) dx
\\&=\int_{\Omega^{c}} u(x,s) \Delta \psi(x,s) dx.
\end{aligned}
\end{equation}
Thus, substituting $\Phi(x, t)$ by $\psi(x, t)$ in \eqref{energysol2}, then using \eqref{psi_u}, we can see that \eqref{energysol2} is equivalent to 
\begin{equation}
\begin{array}{l}\label{energysol2-1}
\d \int_{\Omega^{c}}\big[u_t(x,t)\psi(x,t)- u(x,t)\psi_t(x,t)+\frac{\mu}{t}u(x,t) \psi(x,t)\big] dx \vspace{.2cm}\\
\d +\int_1^t  \int_{\Omega^{c}}u(x,s)\left[\psi_{tt}(x,s)-s^{2m}\Delta \psi(x,s) -\frac{\partial}{\partial s}\left(\frac{\mu}{s}\psi(x,s)\right)\right]dx \,ds\vspace{.2cm}\\
\d =\int_{1}^{t}\int_{\Omega^{c}}|u_t(x,s)|^p\psi(x,s)dx \, ds + \e \int_{\Omega^{c}}\big[-u_0(x)\psi_t(x,1)+\left(\mu u_0(x)+u_1(x)\right)\psi(x,1)\big]dx.
\end{array}
\end{equation}
\par
Now, we introduce the following functionals which are devoted to proving the blowup criteria. More precisely, let
\begin{equation}
\label{F1def}
\mathcal{U}_1(t):=\int_{\Omega^{c}}u(x, t)\psi(x, t)dx,
\end{equation}
and
\begin{equation}
\label{F2def}
\mathcal{U}_2(t):=\int_{\Omega^{c}}u_t(x,t)\psi(x, t)dx.
\end{equation}

In the next lemmas we will give the first lower bounds of 
 $\mathcal{U}_1(t)$ and $\mathcal{U}_2(t)$, respectively. More
precisely, we will prove that $\e^{-1}t^{m}\mathcal{U}_{1}(t)$ and $\e^{-1}\mathcal{U}_{2}(t)$ are coercive. 
\par We note here that the proof of Lemma \ref{F1} below is known in the literature; see e.g. \cite{HHH,HHP2}. For the purpose of making the presentation a self-contained reading, we give here the proof of this lemma in detail.

\begin{lem}
\label{F1}
Let $u$ be an energy solution of the system \eqref{G-sys} with initial data satisfying
the assumptions in Theorem \ref{blowup}. Then, there exists $T_0=T_0(m,\mu)>2$ such that 
\begin{equation}
\label{F1postive}
\mathcal{U}_1(t)\ge C_{\mathcal{U}_1}\, \e t^{-m}, 
\quad\text{for all}\ t \ge T_0,
\end{equation}
where $C_{\mathcal{U}_1}$ is a positive constant depending probably  on $u_0$, $u_1$, $N,m$, and $\mu$.
\end{lem}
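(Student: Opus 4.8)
The plan is to derive a differential inequality for $\mathcal{U}_1(t)$ by exploiting the conjugate equation \eqref{lambda-eq} that $\psi$ satisfies, which kills the bulk integral term in \eqref{energysol2-1}. First I would differentiate $\mathcal{U}_1(t)=\int_{\Omega^c}u\psi\,dx$ in time and relate the result to the identity \eqref{energysol2-1}. Because $\psi$ solves $\psi_{tt}-t^{2m}\Delta\psi-\partial_t(\tfrac{\mu}{t}\psi)=0$, the entire double-integral $\int_1^t\int_{\Omega^c}u[\psi_{ss}-s^{2m}\Delta\psi-\partial_s(\tfrac{\mu}{s}\psi)]\,dx\,ds$ vanishes, so \eqref{energysol2-1} collapses to an identity linking the boundary-in-time term $\int_{\Omega^c}[u_t\psi-u\psi_t+\tfrac{\mu}{t}u\psi]\,dx$ to the nonnegative nonlinear integral $\int_1^t\int|u_t|^p\psi\,dx\,ds$ plus the $\e$-weighted data term. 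Since the data term is a positive multiple of $\e$ (using $u_0,u_1\ge0$, $\not\equiv0$ from \eqref{u0u1} and the positivity of $\psi,\phi$), the left side is bounded below by a positive constant times $\e$ for all $t\ge1$.

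The next step is to convert this into a statement purely about $\mathcal{U}_1$. I would write the bracketed spatial integral as a first-order differential expression in $\mathcal{U}_1$: observe that $\tfrac{d}{dt}\mathcal{U}_1=\int(u_t\psi+u\psi_t)\,dx$, so $\int[u_t\psi-u\psi_t+\tfrac{\mu}{t}u\psi]\,dx=\mathcal{U}_1'(t)-2\int u\psi_t\,dx+\tfrac{\mu}{t}\mathcal{U}_1(t)$. Using $\psi=\rho(t)\phi(x)$ one has $\psi_t=\tfrac{\rho'(t)}{\rho(t)}\psi$, so $\int u\psi_t\,dx=\tfrac{\rho'(t)}{\rho(t)}\mathcal{U}_1(t)$, turning the whole expression into $\mathcal{U}_1'(t)+\big(\tfrac{\mu}{t}-2\tfrac{\rho'(t)}{\rho(t)}\big)\mathcal{U}_1(t)$. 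This yields a linear first-order ODE lower bound of the form $\mathcal{U}_1'(t)+a(t)\mathcal{U}_1(t)\ge C\e$, with $a(t)=\tfrac{\mu}{t}-2\tfrac{\rho'}{\rho}$ explicitly computable from \eqref{rho}.

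The heart of the argument is then solving this differential inequality via an integrating factor and extracting the $t^{-m}$ lower bound. I would multiply through by the integrating factor $\exp(\int a)$, which (given the structure of $\rho$) is essentially $t^{\mu}\rho(t)^{-2}$, integrate from a suitable starting time, and use the asymptotics of the modified Bessel function $K_\eta$ collected in the appendix to evaluate the growth of $\rho(t)=t^{(\mu+1)/2}K_{(\mu-1)/(2(1+m))}(\phi_m(t))$. Since $K_\eta(z)\sim\sqrt{\tfrac{\pi}{2z}}e^{-z}$ as $z\to\infty$ and $\phi_m(t)=\tfrac{t^{1+m}}{1+m}$, the factor $\rho(t)$ decays like $t^{(\mu-m)/2}e^{-\phi_m(t)}$ up to constants, and the integral of the source against the integrating factor, divided back out, produces the claimed algebraic rate $\e t^{-m}$.

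The main obstacle I anticipate is the careful Bessel-asymptotics bookkeeping needed to show $a(t)$ behaves like $2t^m-\tfrac{m}{t}+O(\cdot)$ and that the resulting quadrature gives exactly the exponent $-m$ rather than some nearby power; controlling the lower-order corrections uniformly for $t\ge T_0$ is where the threshold $T_0=T_0(m,\mu)>2$ enters, chosen large enough that the asymptotic estimates for $\rho'/\rho$ are valid and the positive data contribution dominates. I would lean on the properties of $\rho$ listed in Appendix \ref{appendix1} to make each of these bounds rigorous rather than re-deriving the Bessel asymptotics from scratch.
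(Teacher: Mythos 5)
Your proposal follows essentially the same route as the paper's proof: the conjugate equation \eqref{lambda-eq} collapses \eqref{energysol2-1} to the identity $\mathcal{U}_1'(t)+\Gamma(t)\mathcal{U}_1(t)=\e\,C(u_0,u_1)+\text{(nonnegative nonlinear term)}$ with $\Gamma(t)=\frac{\mu}{t}-2\frac{\rho'(t)}{\rho(t)}$, and after dropping the nonlinearity one multiplies by the exact integrating factor $t^{\mu}/\rho^{2}(t)$ and uses the Bessel asymptotics \eqref{Kmu}--\eqref{est-rho} to extract the rate $\e t^{-m}$, exactly as in \eqref{eq6}--\eqref{est-G1-3}. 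Two cosmetic remarks: the positivity of the data term rests on the identity $\mu\rho(1)-\rho'(1)=K_{\frac{\mu-1}{2(1+m)}+1}(\phi_m(1))>0$ (your ``positivity of $\psi$'' alone does not cover the $-u_0\psi_t(x,1)$ contribution), and your anticipated expansion of $a(t)$ should read $2t^{m}+\frac{m}{t}+\cdots$ rather than $2t^{m}-\frac{m}{t}+\cdots$, though neither point affects the argument since the integrating factor is exact.
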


\begin{proof} 
Let $t \in [1,T)$. 
Hence, using \eqref{lambda-eq} and \eqref{test11}, the identity \eqref{energysol2-1} becomes
\begin{equation}
\begin{array}{l}\label{eq5}
\d \int_{\Omega^{c}}\big[u_t(x,t)\psi(x,t)- u(x,t)\psi_t(x,t)+\frac{\mu}{t}u(x,t) \psi(x,t)\big]dx
\vspace{.2cm}\\
\d=\int_1^t\int_{\Omega^{c}}|u_t(x,s)|^p\psi(x,s)dx \, ds 
+\d \e \, C(u_0,u_1),
\end{array}
\end{equation}
where 
\begin{equation}\label{Cfg}
C(u_0,u_1):=\int_{\Omega^{c}}\big[\big(\mu\rho(1)-\rho'(1)\big)u_0(x)+\rho(1)u_1(x)\big]\phi(x)dx.
\end{equation}
Employing \eqref{rho} and \eqref{appendix7}, we get
\begin{equation}\label{Cfg1}
\mu\rho(1)-\rho'(1)=K_{\frac{\mu-1}{2(1+m)} +1}(\phi_m(1)),
\end{equation}
and consequently we deduce that 
\begin{align}\label{Cfg}
C(u_0,u_1)&=K_{\frac{\mu-1}{2(1+m)}}(\phi_m(1)) \int_{\Omega^{c}}  u_1(x)\phi(x)dx \\&\ +K_{\frac{\mu-1}{2(1+m)} +1}(\phi_m(1))\int_{\Omega^{c}}u_0(x)\phi(x)dx. \nonumber
\end{align}
Thus, the constant $C(u_0,u_1)$ is positive thanks to the fact that the function $K_{\eta}(t)$ is positive (see \eqref{Kmu} in Appendix \ref{appendix1}) and the sign of the initial data.\\ 
Using the definition of $\mathcal{U}_1$, given by \eqref{F1def},  and \eqref{test11},  the identity \eqref{eq5} yields
\begin{equation}
\begin{array}{l}\label{eq6}
\d \mathcal{U'}_1(t)+\Gamma(t)\mathcal{U}_1(t)=\int_1^t\int_{\Omega^{c}}|u_t(x,s)|^p\psi(x,s)dx \, ds +\e \, C(u_0,u_1),
\end{array}
\end{equation}
where 
\begin{equation}\label{gamma}
\Gamma(t):=\frac{\mu}{t}-2\frac{\rho'(t)}{\rho(t)}.
\end{equation}
Ignoring the nonlinear term in \eqref{eq6}, then multiplying the obtained equation by $\d \frac{t^\mu}{\rho^2(t)}$ and integrating on $(1,t)$, we infer that
\begin{align}\label{est-G1}
\mathcal{U}_1(t)
\ge\frac{\mathcal{U}_1(1)}{\rho^{2}(1)}\frac{\rho^2(t)}{t^\mu}+{\e}C(u_0,u_1)\frac{\rho^2(t)}{t^\mu}\int_1^t\frac{s^\mu}{\rho^2(s)}ds.
\end{align}
Having in mind the definition of  $\phi_m(t)$, given by \eqref{xi}, the positivity of $\mathcal{U}_1(1)$ and   \eqref{rho},  the estimate \eqref{est-G1} yields
\begin{align}\label{est-G1-1}
\mathcal{U}_1(t)
\ge {\e}C(u_0,u_1) t K^2_{\frac{\mu-1}{2(1+m)}}\left(\phi_m(t)\right)\int^t_{t/2}\frac{ds}{sK^2_{\frac{\mu-1}{2(1+m)}}\left(\phi_m(s)\right)}, \quad \forall \ t \ge 2.
\end{align}
Thanks to \eqref{Kmu}, we deduce that there exists $T_0=T_0(m,\mu)>2$ such that 
\begin{align}\label{est-double}
\phi_m(t)K^2_{\frac{\mu-1}{2(1+m)}}(\phi_m(t))>\frac{\pi}{4} e^{-2\phi_m(t)} \quad \text{and}  \quad \phi_m(t)^{-1}K^{-2}_{\frac{\mu-1}{2(1+m)}}(\phi_m(t))>\frac{1}{\pi} e^{2\phi_m(t)}, \ \forall \ t \ge T_0/2.
\end{align}
Combining \eqref{est-G1-1} and  \eqref{est-double}, and recalling \eqref{xi}, we get
\begin{align}\label{est-U-2}
 \mathcal{U}_1(t)
&\ge \e \frac{C(u_0,u_1)}{4}t^{-m}e^{-2\phi_m(t)}\int^t_{t/2}\phi_m'(s)e^{2\phi_m(s)}ds.
\end{align}
Hence, we conclude that
\begin{align}\label{est-G1-3}
\mathcal{U}_1(t)
\ge \e \kappa C(u_0,u_1)t^{-m}, \ \forall \ t \ge T_0,
\end{align}
where $\d \kappa=\kappa(m)$ is a positive constant.

The proof of Lemma \ref{F1} is thus completed.
\end{proof}

Let
\begin{equation}
\label{F1def1}
\mathcal{F}_1(t):=\int_{\Omega^{c}}u(x, t)\psi_0(x, t)dx, 
\end{equation}
and
\begin{equation}
\label{F2def1}
\mathcal{F}_2(t):=\int_{\Omega^{c}}u_t(x, t)\psi_0(x, t)dx,
\end{equation}
where, $\psi_0(x, t):=e^{-\frac{t^{m+1}-1}{m+1}}\phi(x)$ which satisfies 
\begin{equation}\label{psi0}
\left\{
\begin{array}{ll}
\d\partial_t \psi_{0}(x, t)=-t^m \psi_0(x, t),\vspace{.2cm}\\
\Delta\psi_{0}(x,t)=\psi_{0}(x,t),
\end{array}
\right.
\end{equation}
and $\phi(x)$ is given by \eqref{boundarypb1}. 

We present here the following lemma that will be used to prove that the functional
$\mathcal{U}_2(t)$ is positive for all $t\geq 1$.

\begin{lem}\label{F2+}
Under the same assumptions as in Theorem \ref{blowup}, it holds that
    \begin{equation}
    \mathcal{F}_{2}(t) \ge  \frac{{\e}}{2\mathcal{M}(t)}C_0(u_{0},u_{1}), \quad \forall \ t  \ge 1.
    \end{equation}
\end{lem}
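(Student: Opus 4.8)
The plan is to derive an ODE (or ODE-inequality) for $\mathcal{F}_2(t)$ by choosing the companion test function $\psi_0(x,t)$ so that its specific structure in \eqref{psi0} lets the wave operator collapse into something manageable. The key observation is that $\psi_0$ satisfies $\Delta \psi_0 = \psi_0$ and $\partial_t\psi_0 = -t^m\psi_0$, which should be exploited exactly as $\psi$ was used in \eqref{psi_u} and \eqref{energysol2-1}. First I would substitute $\Phi(x,t) = \psi_0(x,t)$ into the weak formulation \eqref{energysol2}, integrate the Laplacian term by parts using that $u\vert_{\partial\Omega}=0$ and the boundary condition $\phi\vert_{\partial\Omega^c}=0$ (mirroring the computation in \eqref{psi_u}), and differentiate in $t$ to pass from the integrated identity to a pointwise relation among $\mathcal{F}_1$, $\mathcal{F}_2$, and their derivatives.

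Concretely, I expect that differentiating the functional relation will produce a first-order linear ODE of the form $\mathcal{F}_2'(t) + a(t)\mathcal{F}_2(t) = (\text{nonnegative nonlinear term}) + (\text{initial-data contribution})$, where the coefficient $a(t)$ is read off from the damping term $\mu/t$ and the time-dependence $-t^m$ coming from $\partial_t\psi_0$. The crucial sign point is that the nonlinear term $\int_{\Omega^c}|u_t|^p\psi_0\,dx$ is nonnegative (since $\psi_0 = e^{-(t^{m+1}-1)/(m+1)}\phi \ge 0$ by \eqref{phi-equivalente}) and the initial-data term is a positive multiple of $\e$, so after multiplying by the appropriate integrating factor $\mathcal{M}(t)$ and integrating, I can drop the nonlinear contribution and retain only the forced linear part. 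This yields a lower bound of the form $\mathcal{F}_2(t) \ge \frac{\e}{\mathcal{M}(t)} C_0(u_0,u_1)$ up to the constant $\tfrac12$, where $\mathcal{M}(t)$ is precisely the integrating factor associated with $a(t)$ and $C_0(u_0,u_1)$ collects the positive initial-data integrals.

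The main obstacle will be identifying the correct integrating factor $\mathcal{M}(t)$ and verifying that the boundary contributions genuinely vanish. Since $\psi_0$ is not an exact solution of the conjugate equation \eqref{lambda-eq} (unlike $\psi$), the substitution into \eqref{energysol2} will leave a residual term involving $\psi_{0,tt} - t^{2m}\Delta\psi_0 - \partial_t(\tfrac{\mu}{t}\psi_0)$; I would compute this residual explicitly from \eqref{psi0}, finding $\partial_t^2\psi_0 = (-m t^{m-1} + t^{2m})\psi_0$ and $\Delta\psi_0 = \psi_0$, so the residual reduces to a clean multiple of $\psi_0$ whose coefficient feeds into $a(t)$. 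The factor of $\tfrac12$ in the statement suggests that some error term is absorbed for $t$ large, or that a crude but sufficient lower bound on the integrating factor is used; I would handle this by bounding $\mathcal{M}(t)$ below and choosing constants so that the positive initial contribution dominates, exactly as the decay of $\rho(t)$ was handled via the Bessel-function estimates \eqref{est-double} in the proof of Lemma \ref{F1}.

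The overall structure thus parallels the proof of Lemma \ref{F1}: set up the weak identity with the chosen test function, extract a first-order linear ODE-inequality, discard the nonnegative nonlinearity, and integrate against the natural integrating factor to expose the $\e$-proportional positive lower bound controlled by $1/\mathcal{M}(t)$.
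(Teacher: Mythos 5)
Your scaffolding (the test function $\mathcal{M}(t)\psi_0$ with $\mathcal{M}(t)=t^{\mu}$, dropping the nonnegative nonlinearity, integrating against an integrating factor) is the same as the paper's, but there is a genuine gap at exactly the point you wave past: the residual of the conjugate operator applied to $\psi_0$ multiplies $u$, not $u_t$, so it produces terms in $\mathcal{F}_1$, \emph{not} a coefficient $a(t)$ in front of $\mathcal{F}_2$. Concretely, taking $\Phi=\mathcal{M}(s)\psi_0$ in \eqref{energysol2} (where $\mathcal{M}'(t)=\mu t^{\mu-1}$ is chosen precisely to cancel the damping contribution) yields \eqref{eq4}; then using $\mathcal{F}_2=\mathcal{F}_1'+t^m\mathcal{F}_1$ and an integration by parts in time, one arrives at \eqref{eq5bis1}, in which $\mathcal{F}_2(t)$ is coupled pointwise to $t^m\mathcal{F}_1(t)$ and to the memory term $\int_1^t(\mathcal{M}(s)s^m)'\mathcal{F}_1(s)\,ds$. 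Since $\mathcal{F}_1$ cannot be expressed pointwise through $\mathcal{F}_2$ (only through its time integral), you do not obtain a self-contained first-order linear ODE $\mathcal{F}_2'+a(t)\mathcal{F}_2\ge(\ldots)$, and your plan as written does not close.

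The missing ingredient is the sign information $\mathcal{F}_1(t)\ge 0$ for all $t\ge 1$, which the paper imports from the proof of Lemma \ref{F1}: by \eqref{est-G1} and the positivity of the data one has $\mathcal{U}_1(t)\ge 0$ (Remark \ref{rem-pos}), and since $\mathcal{U}_1(t)=e^{\phi_m(t)-\phi_m(1)}\rho(t)\mathcal{F}_1(t)$ with $\rho>0$, also $\mathcal{F}_1(t)\ge 0$; hence $\Sigma_1(t)=(\mu+m)\,t^m\int_1^t s^{\mu+m-1}\mathcal{F}_1(s)\,ds\ge 0$. With this, the paper performs a two-step maneuver that is absent from your outline: it differentiates \eqref{eq5bis1} to get \eqref{F1+bis1}, and then re-inserts the integrated identity \eqref{eq5bis1} itself, in the form of the lower bound \eqref{sigma1}, to control the otherwise sign-indefinite term $\mathcal{M}(t)t^m\bigl(\mathcal{F}_2(t)+t^m\mathcal{F}_1(t)\bigr)$ from below by $\e C_0(u_0,u_1)t^m$. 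Two smaller corrections: the integrating factor for the resulting inequality \eqref{F1+bis45} is $e^{2\phi_m(t)}$ on top of $\mathcal{M}(t)$, not $\mathcal{M}(t)$ alone; and the factor $\tfrac12$ is not an absorbed error term for large $t$, but comes from the exact computation $\int_1^t s^m e^{2\phi_m(s)}\,ds=\tfrac12\bigl(e^{2\phi_m(t)}-e^{2\phi_m(1)}\bigr)$ in \eqref{F1+bis5}, after which $\mathcal{F}_2(1)\ge 0$ is used.
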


\begin{proof}
First, we introduce the following multiplier
\begin{equation}
\label{test1}
\mathcal{M}(t):=t^{\mu}.
\end{equation}
Hence, considering $\mathcal{M}(t)\psi_{0}(x,t)$ as a test function in \eqref{energysol2}, and using \eqref{psi0}, we infer that
\begin{equation}
\begin{array}{l}\label{eq4}
\d \mathcal{M}(t)\int_{\Omega^{c}}u_t(x,t)\psi_0(x,t)dx
-\e\int_{\Omega^{c}}u_1(x)\psi_0(x,1)dx \vspace{.2cm}\\
\d+\int_1^t\mathcal{M}(s)s^m\int_{\Omega^{c}}\left\{
u_t(x,s) \psi_0(x,s)-s^{m}u(x,s)\psi_0(x,s)\right\}dx \, ds \vspace{.2cm}\\
\d=\int_1^t\mathcal{M}(s)\int_{\Omega^{c}}|u_t(x,s)|^p\psi_0(x,s)dx \, ds.
\end{array}
\end{equation}
Using the definition of $\mathcal{F}_{1}$, we observe that 
\begin{equation}\label{eq5-1-f1f2}
\d \mathcal{F}_1'(t)=\int_{\Omega^{c}}
u_t(x,t) \psi_0(x,t)dx -t^{m}\mathcal{F}_{1}(t),
\end{equation}
 then  the above equation with \eqref{eq4} yield
\begin{equation}
\begin{array}{l}\label{eq5-1-0}
\d \mathcal{M}(t)(\mathcal{F}_1'(t)+t^{m}\mathcal{F}_1(t))-\e\int_{\Omega^{c}}u_1(x)\psi_0(x,1)dx+\int_1^t\mathcal{M}(s)s^m \mathcal{F}_1'(s) ds
\vspace{.2cm}\\
\d =\int_1^t\mathcal{M}(s)\int_{\Omega^{c}}|u_t(x,s)|^p\psi_0(x,s)dx \, ds.
\end{array}
\end{equation}
It is straightforward to see that $\mathcal{F}_1(t)$ verifies 
\begin{equation}\label{eq5-1-new}
\d \int_1^t\mathcal{M}(s)s^m\mathcal{F}_1'(s) ds=- \int_1^t (\mathcal{M}(s)s^m)'\mathcal{F}_1(s) ds+\mathcal{M}(t)t^m\mathcal{F}_1(t)-\mathcal{F}_1(1).
 \end{equation}
Thus, by combining \eqref{eq5-1-0} and \eqref{eq5-1-new}, we get
\begin{equation}
\begin{array}{l}\label{eq5-1}
\d \mathcal{M}(t)(\mathcal{F}_1'(t)+2t^{m}\mathcal{F}_1(t))
-{\e}C_0(u_0,u_1) \vspace{.2cm}\\
\d=\int_1^t(\mathcal{M}(s)s^m)'\mathcal{F}_1(s) ds+\int_1^t\mathcal{M}(s)\int_{\Omega^{c}}|u_t(x,s)|^p\psi_0(x,s)dx \, ds,
\end{array}
\end{equation}
where 
$$C_0(u_0,u_1):=\int_{\Omega^{c}}\left\{u_0(x)+u_1(x)\right\}\phi(x)dx.$$
Hence, using the definitions of $\mathcal{F}_1$ and  $\mathcal{F}_2$, given  by \eqref{F1def1} and  \eqref{F2def1}, respectively, and the fact that \eqref{eq5-1-f1f2} can be rewritten as
 \begin{equation}\label{def231-bis}\d \mathcal{F}_2(t)= \mathcal{F}_1'(t) +t^{m}\mathcal{F}_1(t),\end{equation}
 the equation  \eqref{eq5-1} becomes
\begin{equation}
\begin{array}{l}\label{eq5bis1}
\d \mathcal{M}(t)(\mathcal{F}_2(t)+t^{m}\mathcal{F}_1(t))
-{\e}C_0(u_0,u_1)  \vspace{.2cm}\\
\d=\int_1^t(\mathcal{M}(s)s^m)'\mathcal{F}_1(s) ds+\int_1^t\mathcal{M}(s)\int_{\Omega^{c}}|u_t(x,s)|^p\psi_0(x,s)dx \, ds.
\end{array}
\end{equation}
Differentiating in time the identity \eqref{eq5bis1} and using \eqref{def231-bis}, we obtain
\begin{align}
\begin{array}{l}\label{F1+bis1}
\d \frac{d}{dt} \left\{\mathcal{F}_2(t)\mathcal{M}(t)\right\}+   2\mathcal{M}(t)t^{m}\mathcal{F}_2(t)\vspace{.2cm}\\
\d=\mathcal{M}(t)t^{m}\left(\mathcal{F}_2(t)+t^{m}\mathcal{F}_1(t)\right)+\mathcal{M}(t)\int_{\Omega^{c}}|u_t(x,t)|^p\psi_0(x,t)dx.
\end{array}
\end{align}
Thanks to \eqref{eq5bis1}, it is easy to see that
\begin{equation}\label{sigma1}
\d \mathcal{M}(t)(\mathcal{F}_2(t)+t^{m}\mathcal{F}_1(t)) \ge {\e}C_0(u_0,u_1)+\Sigma_1(t)
\end{equation}
where
\begin{equation}\label{sigma11}
\d \Sigma_1(t):=\d   t^{m}\int_1^t (\mathcal{M}(s)s^m)'\mathcal{F}_1(s) ds=\d (\mu+m)  t^{m}\int_1^t s^{\mu+m-1}\mathcal{F}_1(s) ds.
\end{equation}
By using the fact that $\mathcal{U}_1(t)=e^{\phi_m(t)-\phi_m(1)} \rho(t)\mathcal{F}_1(t)$ together with the positivity of $\mathcal{U}_1(t)$, in view of Remark \ref{rem-pos} below, we can easily obtain that   $\Sigma_1(t) \ge 0$.\\
Then, equation \eqref{sigma1} yields
\begin{equation}\label{tC0}
    \mathcal{M}(t)t^{m}(\mathcal{F}_2(t)+t^{m}\mathcal{F}_1(t)) \ge {\e}C_0(u_0,u_1)t^{m}.
\end{equation}
Now, plugging \eqref{tC0} in \eqref{F1+bis1} and employing the fact that $\mathcal{M}(t) \ge 1, \ \forall \ t \ge 1$, we obtain
\begin{equation}\label{F1+bis45}
\frac{d}{dt} \left\{\mathcal{F}_2(t)\mathcal{M}(t)\right\}+   2\mathcal{M}(t)t^{m}\mathcal{F}_2(t)
\ge {\e}C_0(u_0,u_1)t^{m},
\end{equation}
that we rewrite as follows:
\begin{align}\label{F1+bis4}
\frac{d}{dt} \left\{e^{2 \phi_m(t)}\mathcal{F}_2(t)\mathcal{M}(t)\right\} \ge {\e}C_0(u_0,u_1)t^{m}e^{2 \phi_m(t)}.
\end{align}
Therefore, by integrating in time the inequality  \eqref{F1+bis4}, we infer that
\begin{align}\label{F1+bis5}
e^{2 \phi_m(t)}\mathcal{M}(t) \mathcal{F}_2(t) \ge e^{\frac{2}{m+1}}\mathcal{F}_{2}(1)+{\e}C_0\int_{1}^{t}s^{m}e^{2 \phi_m(s)}ds .
\end{align}
Finally, we deduce that
\begin{align}\label{F1+bis6}
\mathcal{F}_2(t) \ge  \frac{1}{2\mathcal{M}(t)}{\e}C_0(u_{0},u_{1}).
\end{align}
This concludes the proof of Lemma \ref{F2+}.
\end{proof}
\begin{rem}\label{rem-pos}
Thanks to \eqref{est-G1} and the positivity of the initial data, the function $\mathcal{U}_1(t)$ is positive for all $t \ge 1$.
\end{rem}
We can now prove the following lemma.
\begin{lem}\label{F11}
Assume that the hypotheses in Theorem \ref{blowup} are fulfilled for $u$  a solution of the Cauchy problem  \eqref{G-sys}. Then, there exists $T_1=T_1(m,\mu)>T_{0}$ such that 
\begin{equation}
\label{F2postive}
\mathcal{U}_2(t)\ge C_{\mathcal{U}_2}\, \e, 
\quad\text{for all}\ t  \ge  T_1,
\end{equation}
where $C_{\mathcal{U}_2}$ is a positive constant depending probably  on $u_0$, $u_1$, $N,m$, and $\mu$.
\end{lem}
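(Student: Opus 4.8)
The plan is to derive a first-order linear differential inequality for $\mathcal{U}_2(t)$ whose source term is controlled from below by $\mathcal{U}_1(t)$, and then to integrate it with a suitable integrating factor, exactly in the spirit of the treatment of $\mathcal{U}_1$ in Lemma \ref{F1}. First I would produce the evolution equation for $\mathcal{U}_2$. Starting from the identity \eqref{eq5} (which is the weak formulation tested against $\psi$ after the conjugate equation has been used) and writing it as $\mathcal{U}_2(t)-\frac{\rho'(t)}{\rho(t)}\mathcal{U}_1(t)+\frac{\mu}{t}\mathcal{U}_1(t)=\int_1^t\!\int_{\Omega^{c}}|u_t|^p\psi\,dx\,ds+\e\,C(u_0,u_1)$, I differentiate once in $t$. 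Using the elementary relation $\mathcal{U}_1'(t)=\mathcal{U}_2(t)+\frac{\rho'(t)}{\rho(t)}\mathcal{U}_1(t)$ together with the defining ODE \eqref{lambda} for $\rho$, the coefficient multiplying $\mathcal{U}_1$ collapses to $-t^{2m}$, and one is left with
$$\mathcal{U}_2'(t)+\Big(\frac{\mu}{t}-\frac{\rho'(t)}{\rho(t)}\Big)\mathcal{U}_2(t)=t^{2m}\mathcal{U}_1(t)+\int_{\Omega^{c}}|u_t(x,t)|^p\psi(x,t)\,dx.$$
The same identity follows equivalently by differentiating \eqref{energysol2} once in $t$, testing the resulting weak form of the PDE against $\psi$, and invoking \eqref{psi_u} with $\Delta\phi=\phi$.

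Next, since the nonlinear integral is nonnegative and $\mathcal{U}_1(t)\ge0$ by Remark \ref{rem-pos}, I discard them to obtain the differential inequality $\mathcal{U}_2'+\Lambda\mathcal{U}_2\ge t^{2m}\mathcal{U}_1$, where $\Lambda(t):=\frac{\mu}{t}-\frac{\rho'(t)}{\rho(t)}$. By Lemma \ref{F1} one has $t^{2m}\mathcal{U}_1(t)\ge C_{\mathcal{U}_1}\e\,t^{m}$ for $t\ge T_0$. Since $\big(\ln(t^\mu/\rho(t))\big)'=\Lambda(t)$, multiplying by the integrating factor $t^\mu/\rho(t)$ gives $\frac{d}{dt}\big(\frac{t^\mu}{\rho(t)}\mathcal{U}_2(t)\big)\ge C_{\mathcal{U}_1}\e\,\frac{t^{\mu+m}}{\rho(t)}$. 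Integrating from $T_0$ to $t$ and discarding the boundary contribution at $T_0$—which is legitimate because $\mathcal{U}_2(T_0)>0$, a consequence of Lemma \ref{F2+} and of the relation $\mathcal{U}_2(t)=e^{\phi_m(t)-\phi_m(1)}\rho(t)\mathcal{F}_2(t)$—I arrive at
$$\mathcal{U}_2(t)\ge C_{\mathcal{U}_1}\,\e\,\frac{\rho(t)}{t^{\mu}}\int_{T_0}^{t}\frac{s^{\mu+m}}{\rho(s)}\,ds.$$

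Finally, I would evaluate this lower bound using \eqref{rho} and the two-sided bounds \eqref{est-double} on the Bessel factor, which yield, for $t\ge T_0/2$,
$$c_1\,t^{\frac{\mu-m}{2}}e^{-\phi_m(t)}\le\rho(t)\le c_2\,t^{\frac{\mu-m}{2}}e^{-\phi_m(t)}.$$
Hence $\frac{\rho(t)}{t^\mu}\gtrsim t^{-\frac{\mu+m}{2}}e^{-\phi_m(t)}$, while $\frac{s^{\mu+m}}{\rho(s)}\gtrsim s^{\frac{\mu+m}{2}}\,\phi_m'(s)\,e^{\phi_m(s)}$ because $\phi_m'(s)=s^{m}$. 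An integration by parts, $\int_{T_0}^{t}s^{\frac{\mu+m}{2}}\,d\big(e^{\phi_m(s)}\big)=t^{\frac{\mu+m}{2}}e^{\phi_m(t)}-\cdots$, shows the integral is $\ge\frac12 t^{\frac{\mu+m}{2}}e^{\phi_m(t)}$ once $t$ exceeds some $T_1>T_0$, the remainder being of strictly lower order. Multiplying the two estimates, the powers $t^{\pm\frac{\mu+m}{2}}$ and the exponentials $e^{\mp\phi_m(t)}$ cancel, leaving the desired constant lower bound $\mathcal{U}_2(t)\ge C_{\mathcal{U}_2}\e$ for $t\ge T_1$.

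The main obstacle is this last step: one must track the two-sided Bessel estimates carefully so that the $e^{\pm\phi_m}$ factors cancel \emph{exactly}, and one must verify that the leading term in the integration by parts genuinely dominates its correction, which is precisely what forces $T_1$ to be chosen larger than $T_0$. Establishing the strict positivity of $\mathcal{U}_2(T_0)$ is equally indispensable, since it is what permits dropping the boundary term when integrating the differential inequality; this is the role played here by Lemma \ref{F2+}, whereas the growth of $\mathcal{U}_2$ itself is supplied by feeding the lower bound for $\mathcal{U}_1$ from Lemma \ref{F1} into the source term $t^{2m}\mathcal{U}_1$.
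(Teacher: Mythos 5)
Your argument is correct, and its overall skeleton is the paper's: the differentiated identity you derive is exactly \eqref{F1+bis2}; the positivity of $\mathcal{U}_2$ used to discard the boundary term at the lower limit comes, as in the paper, from Lemma \ref{F2+} via the relation $\mathcal{U}_2(t)=e^{\phi_m(t)-\phi_m(1)}\rho(t)\mathcal{F}_2(t)$; and your final cancellation of the factors $t^{\pm\frac{\mu+m}{2}}e^{\mp\phi_m(t)}$ mirrors the computation in \eqref{est-G2-12}--\eqref{est-G1-2}. The one genuinely different step is where the coercive source $\e\,t^{m}$ comes from. You keep the full coefficient $\frac{\mu}{t}-\frac{\rho'(t)}{\rho(t)}$ on the left (integrating factor $t^{\mu}/\rho(t)$) and inject Lemma \ref{F1} quantitatively through the retained term $t^{2m}\mathcal{U}_1(t)\ge C_{\mathcal{U}_1}\e\,t^{m}$ on $[T_0,\infty)$. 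The paper instead keeps only $\frac{3\Gamma(t)}{4}$ on the left (integrating factor $t^{3\mu/4}\rho^{-3/2}(t)$), uses $\mathcal{U}_1$ essentially only through its sign (to obtain $\Sigma_3\ge 0$), and manufactures the source from $\Sigma_2$, that is, by reusing the first-order identity \eqref{eq5bis}, whose right-hand side carries $\e\,C(u_0,u_1)$, multiplied by the factor $-\frac{\rho'(t)}{2\rho(t)}-\frac{\mu}{4t}\sim\frac{t^m}{2}$ furnished by \eqref{lambda'lambda1}. Both mechanisms are sound, and yours is leaner as a proof of the lemma alone; it also correctly identifies why $T_1>T_0$ is forced, and your integration by parts does close for all $\mu,m\ge 0$, since the correction $\int_{T_0}^{t}s^{\frac{\mu+m}{2}-1}e^{\phi_m(s)}\,ds$ is $O\bigl(\max(1,t^{\frac{\mu-m}{2}-1})\,e^{\phi_m(t)}\bigr)$, of strictly lower order than the leading term $t^{\frac{\mu+m}{2}}e^{\phi_m(t)}$. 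One thing your shortcut loses, however: the paper's $\frac{3\Gamma}{4}$-splitting produces along the way the inequality \eqref{G2+bis4}, which retains both the pointwise nonlinearity and the memory term $\frac{t^m}{4}\int_1^t\int_{\Omega^{c}}|u_t|^p\psi\,dx\,ds$, and it is precisely \eqref{G2+bis4} that Section \ref{sec-ut} reuses to build the functionals $\mathcal{H}$ and $\mathcal{F}$ and close the blowup argument; with your streamlined proof of the lemma one would still have to rerun your differentiation step keeping the nonlinear terms in order to recover that inequality for the main theorem.
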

 
\begin{proof}
Let $t \in [1,T)$. Recall the definitions of $\mathcal{U}_1$  and $\mathcal{U}_2$, given respectively by \eqref{F1def} and \eqref{F2def}, the definition of $\psi$ as in \eqref{test11}, and the fact that 
 \begin{equation}\label{def23}\d \mathcal{U}_1'(t) -\frac{\rho'(t)}{\rho(t)}\mathcal{U}_1(t)= \mathcal{U}_2(t),\end{equation}
the identity  \eqref{eq6} becomes
\begin{equation}
\begin{array}{l}\label{eq5bis}
\d \mathcal{U}_2(t)+\left(\frac{\mu}{t}-\frac{\rho'(t)}{\rho(t)}\right)\mathcal{U}_1(t)
=\d \int_1^t\int_{\Omega^{c}}|u_t(x,s)|^p\psi(x,s)dx \, ds +\e \, C(u_0,u_1).
\end{array}
\end{equation}
Differentiating in time the equation \eqref{eq5bis} and using \eqref{lambda} and \eqref{def23}, we infer that
\begin{align}\label{F1+bis2}
\d \mathcal{U}_2'(t)+\left(\frac{\mu}{t}-\frac{\rho'(t)}{\rho(t)}\right)\mathcal{U}_2(t)-t^{2m}\mathcal{U}_1(t) =\int_{\Omega^{c}}|u_t(x,s)|^p\psi(x,t)dx. \nonumber
\end{align}
Taking into consideration the definition of $\Gamma(t)$, given by \eqref{gamma}, we conclude that 
\begin{equation}\label{G2+bis3}
\begin{array}{c}
\d \mathcal{U}_2'(t)+\frac{3\Gamma(t)}{4}\mathcal{U}_2(t)\ge\Sigma_2(t)+\Sigma_3(t)+\int_{\Omega^{c}}|u_t(x,t)|^p\psi(x,t)dx,
\end{array}
\end{equation}
where 
\begin{equation}\label{sigma1-exp}
\Sigma_2(t):=\d \left(-\frac{\rho'(t)}{2\rho(t)}-\frac{\mu}{4t}\right)\left(\mathcal{U}_2(t)+\left(\frac{\mu}{t}-\frac{\rho'(t)}{\rho(t)}\right)\mathcal{U}_1(t)\right),
\end{equation}
and
\begin{equation}\label{sigma2-exp}
\Sigma_3(t):=\d \left(t^{2m}+\left(\frac{\rho'(t)}{2\rho(t)}+\frac{\mu}{4t}\right) \left(\frac{\mu}{t}-\frac{\rho'(t)}{\rho(t)}\right) \right)  \mathcal{U}_1(t).
\end{equation}
Considering the asymptotic result \eqref{lambda'lambda1} and employing \eqref{eq5bis}, we deduce the existence of $\tilde{T}_1=\tilde{T}_1(m,\mu) \ge T_0$ such that
\begin{equation}\label{sigma11}
\d \Sigma_2(t) \ge C \, \e t^m + \frac{t^m}4 \int_1^t\int_{\Omega^{c}}|u_t(x,t)|^p\psi(x,s)dx \, ds, \quad \forall \ t \ge \tilde{T}_1. 
\end{equation}
Furthermore, from Lemma \ref{F1} and \eqref{lambda'lambda1}, we conclude the existence of $\tilde{T}_2=\tilde{T}_2(m,\mu) \ge \tilde{T}_1(m,\mu)$ ensuring that
\begin{equation}\label{sigma2}
\d \Sigma_3(t) \ge 0, \quad \forall \ t  \ge  \tilde{T}_2. 
\end{equation}
Now, by combining \eqref{G2+bis3}, \eqref{sigma11} and \eqref{sigma2}, we infer that
\begin{equation}\label{G2+bis4}
\begin{array}{l}
\d \mathcal{U}_2'(t)+\frac{3\Gamma(t)}{4}\mathcal{U}_2(t)\ge C_2 \, \e t^m+\int_{\Omega^{c}}|u_t(x,t)|^p\psi(x,t)dx \vspace{.2cm}\\
\d + \frac{t^m}4 \int_1^t\int_{\Omega^{c}}|u_t(x,t)|^p\psi(x,s)dx \, ds, \quad \forall \ t  \ge  \tilde{T}_2.
\end{array}
\end{equation}
By omiting the nonlinear term in \eqref{G2+bis4}, we obtain
\begin{equation}\label{G2+bis41}
\begin{array}{l}
\d \mathcal{U}_2'(t)+\frac{3\Gamma(t)}{4}\mathcal{U}_2(t)\ge C_2 \, \e t^m, \quad \forall \ t  \ge  \tilde{T}_2.
\end{array}
\end{equation}
Integrating \eqref{G2+bis41} over $(\tilde{T}_2,t)$ after multiplication  by $\frac{t^{3\mu/4}}{\rho^{3/2}(t)}$, we deduce that
\begin{align}\label{est-G111-bis}
\mathcal{U}_2(t)
\ge \mathcal{U}_2(\tilde{T}_2)\frac{\rho^{3/2}(t)}{\rho^{3/2}(\tilde{T}_2)t^{3\mu/4}}+C_2\,{\e} \frac{\rho^{3/2}(t)}{t^{3\mu/4}}\int_{\tilde{T}_2}^t\frac{s^{m+3\mu/4}}{\rho^{3/2}(s)}ds, \quad \forall \ t  \ge  \tilde{T}_2.
\end{align}
Now, observe that $\mathcal{U}_2(t)=e^{\phi_m(t)-\phi_m(1)} \rho(t) \mathcal{F}_2(t)$, where $\mathcal{F}_2(t)$ is given by \eqref{F2def1}.\\
Thus, using Lemma \ref{F2+} we conclude that $\mathcal{U}_{2}(t) \ge 0$ for all $t \ge 1$.\\
Therefore, using \eqref{rho} and the above observation, we infer that
\begin{equation}\label{G2sup}
\mathcal{U}_2(\tilde{T}_2)\frac{\rho^{3/2}(t)}{\rho^{3/2}(\tilde{T}_2)t^{3\mu/4}} \ge 0, \quad \forall \ t  \ge 1.
\end{equation}
Employing \eqref{G2sup}, the estimate \eqref{est-G111-bis} yields
\begin{equation}
   \mathcal{U}_{2}(t) \ge C\,{\e} \frac{\rho^{3/2}(t)}{t^{3\mu/4}}\int_{\tilde{T}_2}^t\frac{s^{m+3\mu/4}}{\rho^{3/2}(s)}ds, \quad \forall \ t  \ge  \tilde{T}_2. 
\end{equation}
Thanks to \eqref{est-double}, and the definitions of  $\rho(t)$ and $\phi_{m}(t)$ given, respectively, by \eqref{rho}
and \eqref{xi}, we infer the existence of  
 $T_1=T_1(m,\mu):=2\tilde{T}_2$ such that the following estimate holds true
\begin{align}\label{est-G2-12}
\mathcal{U}_2(t)
&\ge \d  C\,{\e}  t^{-\frac{3}{4}m}e^{-\frac{3}{2}\phi_{m}(t)} \int^t_{t/2} s^{\frac{3}{4}m}\phi'_{m}(s) e^{\frac{3}{2}\phi_{m}(s)}ds.
\end{align}
A simple computation in the integral term in the above inequality gives that
\begin{align}\label{est-G2-123}
\mathcal{U}_2(t)
&\ge \d   C\,{\e} \left(1- e^{-\frac{3t^{m+1}}{2m+2}\left(1- (\frac12)^{m+1}\right)}\right).
\end{align}
Consequently,  we deduce that
\begin{align}\label{est-G1-2}
\mathcal{U}_2(t)
\ge  C_{\mathcal{U}_2}\,{\e}, \quad \forall \ t \ge T_1.
\end{align}

 Therefore, Lemma \ref{F11} is proven.
\end{proof}


\section{Proof of Theorem \ref{blowup}.}\label{sec-ut}

This section is aimed to proving the main theorem in this article which exposes the blowup result and the lifespan estimate of the solution of \eqref{G-sys}. For this purpose, we will use the lemmas proven in Section \ref{aux}.

First, we define the following functional:
\[
\mathcal{H}(t):=
\int_{\tilde{T}_3}^t \int_{\Omega^{c}}|u_t(x,s)|^p\psi(x,s)dx ds
+C_3 \e,
\]
where $C_3=\min(C_2,8C_{\mathcal{U}_2})$; $C_{\mathcal{U}_2}$ is given by Lemma \ref{F11}, and and we
choose $\tilde{T}_3>T_1$ such that $$2t^{m}-\frac{3\Gamma(t)}{4}>0, \quad \forall \ t \ge\tilde{T}_3,$$
this can be maintained by \eqref{gamma} and \eqref{lambda'lambda1}.\\
Now, the next functional is
$$\mathcal{F}(t):=8 \,\mathcal{U}_2(t)-\mathcal{H}(t).$$
Then, using \eqref{G2+bis4}, we infer that
\begin{equation}\label{G2+bis6}
\begin{array}{rcl}
\d \mathcal{F}'(t)+\frac{3\Gamma(t)}{4}\mathcal{F}(t) &\ge& \d 7\int_{\Omega^{c}}|u_t(x,t)|^p\psi(x,t) dx+C_3 \e \left(8t^{m}-\frac{3\Gamma(t)}{4}\right) \vspace{.2cm}\\ &+&  \d
\left(2t^{m}-\frac{3\Gamma(t)}{4}\right)\int_{\tilde{T}_3}^t \int_{\Omega^{c}}|u_t(x,s)|^p\psi(x,s)dx ds\vspace{.2cm}\\
&\ge&0, \qquad \forall \ t \ge \tilde{T}_3.
\end{array}
\end{equation}
Multiplying \eqref{G2+bis6} by $\frac{t^{3 \mu/4}}{\rho^{3/2}(t)}$ and integrating over $(\tilde{T}_3,t)$, we deduce that
\begin{align}\label{est-G111}
 \mathcal{F}(t)
\ge \mathcal{F}(\tilde{T}_3)\frac{\tilde{T}_3^{3 \mu/4}\rho^{3/2}(t)}{t^{3 \mu/4}\rho^{3/2}(\tilde{T}_3)}, \quad \forall \ t \ge \tilde{T}_3,
\end{align}
where $\rho(t)$ is given by \eqref{rho}.\\
Therefore, we deduce that $\d \mathcal{F}(\tilde{T}_3)=8\mathcal{U}_2(\tilde{T}_3)-C_3 \e \ge 8\mathcal{U}_2(\tilde{T}_3)-8C_{\mathcal{U}_2}\e \ge 0$; the positivity of $\mathcal{F}(\tilde{T}_3)$ is  guaranteed in view of Lemma \ref{F11} and the definition of $C_3$, namely $C_3=\min(C_2,8C_{\mathcal{U}_2}) \le 8C_{\mathcal{U}_2}$. This implies that $\mathcal{F}(t)$ is positive for all $t \ge \tilde{T}_3$. Consequently, we obtain
\begin{equation}
\label{G2-est}
8\mathcal{U}_2(t)\geq \mathcal{H}(t), \quad \forall \ t \ge \tilde{T}_3.
\end{equation}
Thanks to  H\"{o}lder's inequality, \eqref{psi1} and \eqref{F2postive}, we can bound by below the nonlinear term as follows:
\begin{equation}
\begin{array}{rcl}
\d \int_{\Omega^{c}}|u_t(x,t)|^p\psi(x,t)dx &\geq&\d \mathcal{U}_2^p(t)\left(\int_{\{|x|\leq R+\phi_m(t)\}}\psi(x,t)dx\right)^{-(p-1)} \vspace{.2cm}\\ &\geq& C \mathcal{U}_2^p(t) \rho^{-(p-1)}(t)e^{-(p-1)\phi_m(t)}(\phi_m(t))^{-\frac{(N-1)(p-1)}2}.
\end{array}
\end{equation}
Using \eqref{est-rho}, the above estimate becomes
\begin{equation}\label{4.5}
\d \int_{\Omega^{c}}|u_t(x,t)|^p\psi(x,t)dx \geq C \, \mathcal{U}_2^p(t) t^{-\frac{\left[(N-1)(m+1)-m+\mu\right](p-1)}{2}}, \ \forall \ t \ge \tilde{T}_3.
\end{equation}
Then, combining  \eqref{G2-est} and \eqref{4.5}, we deduce that
\begin{equation}
\label{inequalityfornonlinearin}
\mathcal{H}'(t)\geq C \mathcal{H}^p(t) t^{-\frac{\left[(N-1)(m+1)-m+\mu\right](p-1)}{2}}, \quad \forall \ t \ge \tilde{T}_3,
\end{equation}
that we rewrite as below
\begin{equation}\label{4.7}
\frac{\mathcal{H}'(t)}{\mathcal{H}^{p}(t)} \geq C  t^{-\frac{\left[(N-1)(m+1)-m+\mu\right](p-1)}{2}}, \quad \forall \ t \ge \tilde{T}_3.
\end{equation}
Now, integrating the inequality \eqref{4.7} on $[t_1, t_2]$, for all $t_2 > t_1 \geq \tilde{T}_3$,  we obtain
\begin{equation}\label{4.8}
\mathcal{H}^{-(p-1)}(t_{2})-\mathcal{H}^{-(p-1)}(t_{1}) \geq C \int_{t_{1}}^{t_{2}}s^{-\frac{\left[(N-1)(m+1)-m+\mu\right](p-1)}{2}} ds,  \quad \forall  \ t_2 > t_1 \ge \tilde{T}_3.
\end{equation}
\\Using \eqref{G2-est} and \eqref{F2postive}, we can observe that $\mathcal{H}(\tilde{T}_3)=C_3 \e>0$, and 
\begin{equation}\label{Hp}
\mathcal{H}^{-(p-1)}(t) \leq C \e^{-(p-1)}, \quad \forall \ t \ge \tilde{T}_3.
\end{equation}
At this level, we distinguish two cases depending on the dimension $N$.
\\{\bf First case ($N=1$).}
\\For this value of $N$, and when $(\mu-m)(p-1)=2$, the estimate \eqref{4.8} implies that
\begin{equation}
    \mathcal{H}^{-(p-1)}(t_2) \geq \mathcal{H}^{-(p-1)}(t_1)+C ln(\frac{t_2}{t_1}),  \quad \forall  \ t_2 > t_1 \ge \tilde{T}_3.
\end{equation}
Then, employing \eqref{Hp},  we infer that
\begin{equation}\label{upperbound1}
    t_{2}\leq \exp(C \e^{-(p-1)}).
\end{equation}
Now, for the case $(\mu-m)(p-1)<2$, the estimate \eqref{4.8} yields
\begin{equation}
    \mathcal{H}^{-(p-1)} (t_{2}) \geq \mathcal{H}^{-(p-1)}(t_1)+C(t_{2}^{1-\frac{(\mu-m)(p-1)}{2}}-t_{1}^{1-\frac{(\mu-m)(p-1)}{2}}),  \quad \forall  \ t_2 > t_1 \ge \tilde{T}_3.
\end{equation}
Using \eqref{Hp} and choosing $\tilde{T}_4$ such that
\begin{equation}\label{T4}
    \tilde{T}_4=\max \big(\tilde{T}_3,C^{\frac{2}{2-(\mu-m)(p-1)}} \e^{\frac{-2(p-1)}{2-(\mu-m)(p-1)}} \big),
\end{equation}
we infer that,
\begin{equation}
    t_{2}\leq C \e^{\frac{-2(p-1)}{2-(\mu-m)(p-1)}}.
\end{equation}
{\bf Second case ($N\ge 2$).}
\\For $p=1+\frac{2}{\mu-m}$, we obtain the same upper bound as in  \eqref{upperbound1}. However, for $p<1+\frac{2}{\mu-m}$ we have 
\begin{equation}
 \mathcal{H}^{-(p-1)} (t_{2}) \geq C(t_{2}^{1-\frac{\left[(N-1)(m+1)-m+\mu\right](p-1)}{2}}-t_{1}^{1-\frac{\left[(N-1)(m+1)-m+\mu\right](p-1)}{2}}),  \quad \forall  \ t_2 > t_1 \ge \tilde{T}_3.    
\end{equation}
At this level, we set $\tilde{T}_5$ such that
\begin{equation}\label{T5}
    \tilde{T}_5=\max \big(\tilde{T}_3,(C\e^{-(p-1)})^\frac{2}{2-\left[(N-1)(m+1)-m+\mu\right](p-1)} \big).
\end{equation} 
Hence, using \eqref{Hp}, we deduce that
\begin{equation}
 t_{2}\leq C \e^{\frac{-2(p-1)}{2-\left[(N-1)(m+1)-m+\mu\right](p-1)} }  .
\end{equation}
This achieves the proof of Theorem \ref{blowup}. 

\appendix 
\section{}\label{appendix1}
In this appendix, we will list some properties of the function  $\rho(t)$ solution of \eqref{lambda}. Hence, following similar computations as in \cite[Appendix A]{HHH} (with $\nu$=0), we can write the expression of $\rho (t)$ as follows:
\begin{equation}\label{rho1}
\rho(t)=t^{\frac{\mu+1}{2}}K_{\frac{\mu-1}{2(1+m)}}(\phi_m(t)), \quad \forall \ t \ge 1,
\end{equation}
where
\begin{equation}\label{bessel1}
K_{\eta}(t)=\int_0^\infty\exp(-t\cosh \zeta)\cosh(\eta \zeta)d\zeta,\ \eta\in \mathbb{R},
\end{equation}
Thanks to \eqref{bessel1}, we infer that the function $\rho(t)$ is positive  on $[1,\infty)$. Then, from \cite{Gaunt},  the function $K_{\xi}(t)$ satisfies
\begin{equation}\label{Kmu}
K_{\eta}(t)=\sqrt{\frac{\pi}{2t}}e^{-t} (1+O(t^{-1})), \quad \text{as} \ t \to \infty.
\end{equation}
Combining \eqref{Kmu} and \eqref{rho1}, we obtain the existence of a constant $C_1$ such that   
\begin{equation}\label{est-rho}
C_1^{-1}t^{\frac{\mu-m}{2}} \exp(-\phi_m(t)) \le  \rho(t) \le C_1 t^{\frac{\mu-m}{2}} \exp(-\phi_m(t)), \quad \forall \ t \ge 1, 
\end{equation}
where $\phi_m(t)$ is defined by \eqref{xi}. Furthermore, for
all $m>0$, we have
\begin{equation}\label{lambda'lambda2}
   \frac{\rho'(t)}{\rho(t)}=\frac{\mu+1}{2t} +t^{m} \frac{K'_{\frac{\mu-1}{2(1+m)}}(\phi_{m}(t))}{K_{\frac{\mu-1}{2(1+m)}}(\phi_{m}(t))},
\end{equation}
Exploiting the well-known identity for the modified Bessel function of second kind, we have
\begin{equation}\label{appendix6}
\frac{d}{dy}K_{\nu}(y)=-K_{\nu+1}(y)+\frac{\nu}{y}K_{\nu}(y),
\end{equation}
and combining \eqref{lambda'lambda2} and \eqref{appendix6}, we obtain
\begin{equation}\label{appendix7}
   \frac{\rho'(t)}{\rho(t)}=\frac{\mu}{t}-t^{m}\frac{K_{1+\frac{\mu-1}{2(1+m)}}(\phi_{m}(t))}{K_{\frac{\mu-1}{2(1+m)}}(\phi_{m}(t))}.
\end{equation}
Thus, we deduce that
\begin{equation}\label{lambda'lambda1}
\d \lim_{t \to +\infty} \left(\frac{\rho'(t)}{t^m \rho(t)}\right)=-1.
\end{equation}
Finally, we refer the reader to \cite{HHH,Gaunt,Palmieri} for more details about the properties of the functions $\rho(t)$ and $K_{\eta}(t)$.
\color{black}

\bibliographystyle{plain}

\begin{thebibliography}{20}


\bibitem{HHH}
{M.F. Ben Hassen, M. Hamouda, M.A. Hamza and H.K. Teka}, {\it  Nonexistence result for the generalized Tricomi  equation with  the  scale-invariant damping, mass term  and time derivative nonlinearity}. Asymptotic Analysis, {\bf 128} (2022), 495--515. 



\bibitem{CLP} W. Chen, S. Lucente and A. Palmieri, 
{\it Nonexistence of global solutions for generalized Tricomi equations with combined nonlinearity.} Nonlinear Analysis: Real World Applications, Volume {\bf 61} (2021), 
103354.


\bibitem{DMSZ}{Y. Du, J. Metcalfe, C. D. Sogge, Y. Zhou}, {\it Concerning the Strauss conjecture and almost global existence for nonlinear Dirichlet-wave equations in 4-dimensions}, Comm. Part. Diff. Equa. {\bf 33} (8) (2008), 1487--1506.



\bibitem{Fan}{X. Fan, S. Ming, W. Han and Z. Liang,} {\it Lifespan estimate of solution to the semilinear wave equation with damping
term and mass term.} AIMS Mathematics, (2023), 17860--17889.


\bibitem{Gaunt}
{R.E. Gaunt,} {\it  Inequalities for modified Bessel functions and their integrals,} J. Mathematical
Analysis and Applications, {\bf 420} (2014), 373--386.



\bibitem{Our5}
{M. Hamouda and M.A. Hamza}, {\it  Blowup and lifespan estimate for the generalized Tricomi equation with mixed nonlinearities}. {Adv. Pure Appl. Math.,} {\bf 12} (2021), Special issue, 54--70.

\bibitem{HHP1}
{M. Hamouda, M.A. Hamza and A. Palmieri}, {\it  A note on the nonexistence of global solutions to the semilinear wave equation with nonlinearity of derivative-type in the generalized Einstein-de Sitter spacetime}. {Communications on Pure $\&$ Applied Analysis,}{ {\bf 20} (2021), no. 11, 3703--3721.}

\bibitem{HHP2}
{M. Hamouda, M.A. Hamza and A. Palmieri}, {\it  Blowup and lifespan estimates for a damped wave equation in the Einstein-de Sitter spacetime with nonlinearity of derivative type}, NoDEA Nonlinear Differential Equations Appl., {\bf 29} (2022), no. 2, Paper No. 19, 15 pp.



\bibitem{HHY}
{M. Hamouda, M.A. Hamza and Y. Bouthaina}, {\it blowup and lifespan estimate for the generalized
tricomi equation with the  scale-invariant damping and combined nonlinearities on exterior domains}. In preparation.

\bibitem{WH}
{W. Han}, {\it Blowup of solutions to one dimensional initial boundary value problems for semilinear wave equations with variable coefficients}, J. Partial Differ. Equ. {\bf 26} (2) (2013), 138--150.

\bibitem{WH1}
{W. Han}, {\it Concerning the Strauss conjecture for the sub-critical and critical cases on the exterior domain in two space
dimensions}, Nonlinear Anal. {\bf 84} (2013), 136--145. 

\bibitem{Hidano0}
{K. Hidano, J. Metcalfe, H. F. Smith, C. D. Sogge, Y. Zhou}, {\it On abstract Strichartz estimates and the Strauss conjecture for nontrapping obstacles}, Tran. Amer. Math. Soci. {\bf 362} (5) (2010), 2789--2809.




\bibitem{LLWW}{N. A. Lai, M. Y. Liu, K. Wakasa, C. B. Wang}, {\it Lifespan estimates for 2-dimensional semilinear
wave equations in asymptotically Euclidean exterior domains}, J. Func. Anal., {\bf 281} (2021), 109253.

\bibitem{LZ}{N. A. Lai, Y. Zhou}, {\it Finite time blow up to critical semilinear wave equation outside the ball in 3-D,}
Nonl. Anal. {\bf 125} (2015), 550--560.


\bibitem{LZ2}{N. A. Lai, Y. Zhou}, {\it Nonexistence of global solutions to critical semilinear wave equations in exterior domain in high dimensions}, Nonl. Anal. {\bf 143} (2016), 89--104.



\bibitem{LP-Tricomi} S. Lucente and A. Palmieri, 
{\it A blowup result for a generalized Tricomi equation with
nonlinearity of derivative type,} Milan J. Math., {\bf 89} (2021), 45--57. \\
https://doi.org/10.1007/s00032-021-00326-x. 





\bibitem{Palmieri} A. Palmieri, 
{\it Blow – up results for semilinear damped wave equations in
Einstein - de Sitter spacetime,} Z. Angew. Math. Phys., {\bf 72}, 64 (2021). \\
https://doi.org/10.1007/s00033-021-01494-x.



\bibitem{Ren}
{C. Ren, S. Ming, X. Fan and J. Du}, {\it Blow up of solutions to semilinear wave equation with scale invariant damping on exterior domain,} Boundary Value Problems,  36 (2023). \\
https://doi.org/10.1186/s13661-023-01722-5.




\bibitem{SSW}
{H. F. Smith, C. D. Sogge, C. B. Wang}, {\it Strichartz estimates for Dirichlet-wave equations in two
dimensions with applications}, Tran. Amer. Math. Soci., {\bf 364} (2012), 3329--3347.

\bibitem{SW}
{M. Sobajima, K. Wakasa}, {\it Finite time blowup of solutions to semilinear wave equation in an exterior
domain}, J. Math. Anal. Appl., 484 (1) (2019),  123667.

\bibitem{Su}
{Y. Su, S. Lai, S. Ming and X. Fan}, {\it Lifespan estimates of solutions to semilinear wave equations with damping term on the exterior domain}, Applicable Analysis, Vol. {\bf 102} Issue 12, (2023), 3398--3417. DOI: 10.1080/00036811.2022.2069101.



\bibitem{Wang}
{C. B. Wang}, {\it Long time existence for semilinear wave equations on asymptotically flat space-times},
Comm. Part. Diff. Equa., {\bf 42} (7) (2017), 1150--1174.





\bibitem{Zhou2}
{Y. Zhou and W. Han}, {\it Blowup of solutions to semilinear wave equations with variable coefficients and boundary}, J. Math.
Anal. Appl., {\bf 374} (2011), 585--601.






\end{thebibliography}

\end{document}